\newtheorem{theorem}{Theorem}[section]
\newtheorem{corollary}[theorem]{Corollary}
\newtheorem{lemma}[theorem]{Lemma}
\newtheorem{proposition}[theorem]{Proposition}
\theoremstyle{definition}
\theoremstyle{remark}
\newtheorem{remark}[theorem]{Remark}
\numberwithin{equation}{section}
\newcommand{\opdisk}{\mathbb{D}}
\newcommand{\cldisk}{\overline{\mathbb{D}}}
\newcommand{\bdry}{\partial\opdisk^n}
\newcommand{\polyint}[2]{\mathrm{d}\mu_1({#2}_1)\cdots\mathrm{d}\mu_{#1}({#2}_{#1})}
\providecommand{\bysame}{\leavevmode\hbox to3em{\hrulefill}\thinspace}
\providecommand{\MR}{\relax\ifhmode\unskip\space\fi MR }
\providecommand{\href}[2]{#2}
\begin{document}

\title[Compact Hankel operators]{Compact Hankel operators on\\ generalized Bergman spaces of the polydisc}
\author{Trieu Le}
\address{Department of Mathematics\\
Mail Stop 942\\
University of Toledo\\
Toledo, OH 43606}
\email{trieu.le2@utoledo.edu}
\subjclass{Primary 47B35}
\keywords{Bergman space, Hankel operator, compactness}

\begin{abstract} Let $\vartheta$ be a measure on the polydisc $\opdisk^n$ which is the product of $n$ regular Borel probability measures so that $\vartheta([r,1)^n\times\mathbb{T}^n)>0$ for all $0<r<1$. The Bergman space $A^2_{\vartheta}$ consists of all holomorphic functions that are square integrable with respect to $\vartheta$. In one dimension, it is well known that if $f$ is continuous on the closed disc $\cldisk$, then the Hankel operator $H_{f}$ is compact on $A^2_{\vartheta}$. In this paper we show that for $n\geq 2$ and $f$ a continuous function on $\cldisk^n$, $H_{f}$ is compact on $A^2_{\vartheta}$ if and only if there is a decomposition $f=h+g$, where $h$ belongs to $A^2_{\vartheta}$ and $\lim_{z\to\bdry}g(z)=0$.
\end{abstract}
\maketitle

\section{Introduction}\label{S:intro}

Fix a positive integer $n\geq 1$. Let $\mathbb{D}^n$ be the open unit polydisc in $\mathbb{C}^n$ 
and let $\mathbb{T}^n$ be the $n$-torus, which is the Shilov boundary of $\mathbb{D}^n$.  
The closure of $\mathbb{D}^n$ is $\cldisk^n$, the product of $n$ copies of the closed unit disc. 

For $z=(z_1,\ldots,z_n)\in\mathbb{C}^n$ and $\zeta=(\zeta_1,\ldots,\zeta_n)\in\mathbb{T}^n$, 
we use $z\cdot\zeta$ and $\zeta\cdot z$ to denote the point $(z_1\zeta_1,\ldots,z_n\zeta_n)$. We write $\overline{z}=(\overline{z}_1,\ldots,\overline{z}_n)$, 
and for any $m=(m_1,\ldots,m_n)$ in $\mathbb{Z}^n$, we write $z^m=z_1^{m_1}\cdots z_n^{m_n}$ whenever it is defined. 
We use $\sigma$ to denote the surface measure on $\mathbb{T}^n$ which is normalized so that $\sigma(\mathbb{T}^n)=1$. 
Let $\mu$ be a regular Borel probability measure on $[0,1)^n$. Then there is a regular Borel probability measure 
$\vartheta$ on $\opdisk^n$ so that 
\begin{equation}\label{Eq:int-polar}
\int_{\opdisk^n}f(z)\mathrm{d}\vartheta(z) 
= \int_{[0,1)^n}\Big\{\int_{\mathbb{T}^n}f(r\cdot\zeta)\mathrm{d}\sigma(\zeta)\Big\}\mathrm{d}\mu(r)
\end{equation}
for all continuous functions $f$ with compact support on $\opdisk^n$. 
It then follows that the above identity also holds true for all functions $f$ in $L^1(\opdisk^n,\vartheta)$. 

In this paper we are interested in those measures $\mu$ which satisfy the condition $\mu([r,1)^n)>0$ for $0<r<1$. 
This implies that $\vartheta(\{z\in\opdisk^n: |z_1|\geq r,\ldots,|z_n|\geq r\})>0$ for $0<r<1$. 
We write $L^2_{\vartheta}$ for $L^2(\opdisk^n,\vartheta)$ and $\|\cdot\|_{2}$ for the norm on $L^2_{\vartheta}$. 
The Bergman space $A^2_{\vartheta}$ is the closure in $L^2_{\vartheta}$ of the space of all holomorphic polynomials. 
The condition on $\mu$ will imply that all functions in $A^2_{\vartheta}$ are holomorphic on the polydisc. 
Let $P$ denote the orthogonal projection from $L^2_{\vartheta}$ onto $A^2_{\vartheta}$. 
For any function $f$ in $L^2_{\vartheta}$, the (big) Hankel operator $H_{f}$ is a densely defined operator 
from $A^2_{\vartheta}$ into $L^2_{\vartheta}\ominus A^2_{\vartheta}$ by $H_{f}(\varphi) = (1-P)(f\varphi)$ 
for all holomorphic polynomials $\varphi$. The function $f$ will be called a symbol of the operator $H_f$. 
It is clear that if $f$ is bounded, then $H_{f}$ is a bounded operator with $\|H_{f}\|\leq\|f\|_{\infty}$. 
However, there are unbounded functions $f$ such that $H_{f}$ extends to a bounded operator on $A^2_{\vartheta}$. 
In fact, if $f$ belongs to $A^2_{\vartheta}$, then since $f\varphi$ belongs to $A^2_{\vartheta}$ 
for all holomorphic polynomials $\varphi$, we conclude that $H_{f}=0$. 
Conversely, if $H_{f}=0$, then since $0 = H_{f}(1)=(I-P)(f)$, we see that $f$ must belong to $A^2_{\vartheta}$. 
Therefore, $H_{f}=0$ if and only if $f$ is in $A^2_{\vartheta}$. 
This shows that a Hankel operator has many symbols and any two symbols of the same operator differ by a function in $A^2_{\vartheta}$.

It is well known that if a function $g\in L^2_{\vartheta}$ vanishes outside a compact subset of $\mathbb{D}^n$, 
then the Hankel operator $H_{g}$ is compact. Let $\bdry$ be the topological boundary of $\opdisk^n$ as a subset of $\mathbb{C}^n$. 
If $g\in L^2_{\vartheta}$ such that $\lim_{z\to\bdry}g(z)=0$ (that is, for any $\epsilon>0$, 
there is a compact subset $M$ of $\opdisk^n$ so that $|g(z)|<\epsilon$ for $\vartheta$-a.e. $z$ in $\opdisk^n\backslash M$), 
then an approximation argument shows that $H_{g}$ is also a compact operator. 
This together with the above fact about zero Hankel operators implies that if $f=h+g$, 
where $h$ belongs to $A^2_{\vartheta}$ and $\lim_{z\to\bdry}g(z)=0$, then $H_{f}$ is compact.

In the one-dimensional case ($n=1$), it is well known that if $f$ is continuous on $\overline{\mathbb{D}}$, 
then $H_{f}$ is compact. See \cite[p. 226]{ZhuAMS2007} for the case of weighted Bergman spaces. 
For generalized Bergman spaces, one can prove this by checking directly that $H_{\bar{z}^{i} z^{j}}$ 
is compact for all integers $i, j\geq 0$. See Section \ref{S:Hankel-quasi} for more details. 
The case $n\geq 2$ turns out to be completely different. 
Not all Hankel operators with continuous symbols are compact. 
More surprisingly, we will show, under the assumption that $\mu$ is the product of $n$ measures on $[0,1)$, 
that if $f$ is continuous on $\overline{\mathbb{D}}^n$, 
then $H_{f}$ is compact if and only if $f$ admits a decomposition $f=h+g$, 
where $h$ belongs to $A^2_{\vartheta}$ and $\lim_{z\to\bdry}g(z)=0$.

If $\mathrm{d}\mu(r_1,\ldots,r_n)=2^nr_1\cdots r_n\mathrm{d}r_1\cdots\mathrm{d}r_n$, then
$A^2_{\vartheta}$ is the usual Bergman space of the polydisc. 
K. Stroethoff \cite{StroethoffIJM1990,StroethoffJOT1990} and D. Zheng \cite{ZhengIEOT1989} gave necessary 
and sufficient conditions on a bounded function $f$ for which $H_{f}$ is compact. 
However, their conditions, which involve the projection $P$ and Mobius transformations, 
are difficult to check. Indeed, even if a function $f$ is assumed to be continuous on $\overline{\mathbb{D}}^n$, 
it is not clear from their results what geometric conditions $f$ needs to satisfy in order for $H_f$ to be compact. 
Our approach (though works only for continuous functions) is different from theirs and our result is more transparent.

To conclude the section, we would like to mention some results on the compactness of Hankel operators 
on the Hardy space $H^2=H^2(\mathbb{T}^n)$. 
In the one-dimensional case, it is a classical theorem of Hartman (see \cite[Chapter 10]{ZhuAMS2007}) 
that $H_{f}$ can be extended to a compact operator if and only if $f=h+g$, 
where $h$ belongs to $H^2$ and $g$ is continuous on the circle $\mathbb{T}$. 
On the other hand, the case $n\geq 2$ is much different. 
It was showed by M. Cotlar and C. Sadosky in \cite{CotlarIEOT1993} 
and P. Ahern, E.H. Youssfi and K. Zhu in \cite{AhernJOT2009} with a different approach 
that if $H_f$ is compact, then $f$ must belong to $H^2$. 
This means that there is no non-zero compact Hankel operator on $H^2$. 
This result was also proved in the setting of Hardy-Sobolev spaces on the polydisc 
by Ahern, Youssfi and Zhu in the same paper with the same approach. 
Our analysis in the present paper was actually motivated by their results and techniques.

\section{Preliminaries}\label{S:prel}

In this section we explain in more details some of the results that we mentioned in the Introduction. 
From Cauchy's formula and the assumption that $\mu([r,1)^n)>0$ for all $0<r<1$, 
for any compact subset $M$ of $\mathbb{D}^n$, there is a positive constant $C_{M}$ 
so that $|p(z)|\leq C_M\|p\|_{2}$ for all $z\in M$, and all holomorphic polynomials $p$. 
This implies that for $f\in A^2_{\vartheta}$, $f$ is holomorphic on $\mathbb{D}^n$ 
and we also have $|f(z)|\leq C_M\|f\|_{2}$ for all $z\in M$. 
In fact, it can be showed that $A^2_{\vartheta}$ is the space of all functions 
in $L^2_{\vartheta}$ that are holomorphic on $\mathbb{D}^n$. 
Since $|f(z)|\leq C_M\|f\|_{2}$, the valuation map $z\mapsto f(z)$ is a continuous functional 
on $A^2_{\vartheta}$ for any $z\in\mathbb{D}^n$. 
So there is a function $K_z$ in $A^2_{\vartheta}$ such that $f(z)=\langle f,K_z\rangle$ 
for any $f\in A^2_{\vartheta}$. The function $K_z$ is called the reproducing kernel at $z$. 
For any compact subset $M$ and for any $z\in M$, since $K_z(z)\leq C_M\|K_z\|_{2}=C_M(K_z(z))^{1/2}$, we have $K_z(z)\leq C_M^2$.

From \eqref{Eq:int-polar}, the monomials $\{z^m: m\in\mathbb{Z}_{+}^n\}$ are pairwise orthogonal. 
On the other hand, the linear span of these monomials is dense in $A^2_{\vartheta}$. 
Therefore $A^2_{\vartheta}$ has the following orthonormal basis, usually referred to as the standard orthonormal basis, $\{e_{m}(z)=\frac{z^m}{\sqrt{c_{m}}}: m\in\mathbb{Z}_{+}^n\}$, where $$c_{m}=\int_{\opdisk^n}z^m\bar{z}^m\mathrm{d}\vartheta(z)=\int_{[0,1)^n}r_1^{2m_1}\cdots r_n^{2m_n}\mathrm{d}\mu(r_1,\ldots,r_n).$$

Suppose $f$ is a function in $L^2_{\vartheta}$. Then
\begin{align}\label{E:Hilbert-Schmidt}
\sum_{m\in\mathbb{Z}_{+}^n}\|H_fe_{m}\|_{2}^2 & \leq \sum_{m\in\mathbb{Z}_{+}^n}\|fe_{m}\|_{2}^2\notag\\
& = \int_{\opdisk^n}|f(z)|^2\Big\{\sum_{m\in\mathbb{Z}_{+}^n}|e_{m}(z)|^2\Big\}\mathrm{d}\vartheta(z)\\
& = \int_{\opdisk^n}|f(z)|^2K_z(z)\mathrm{d}\vartheta(z),\notag
\end{align}
where the last equality follows from the well known formula $$K_z(z)=\|K_z\|_{2}^2 = \sum_{m\in\mathbb{Z}_{+}^n}|\langle K_{z},e_{m}\rangle|^2=\sum_{m\in\mathbb{Z}_{+}^n}|e_{m}(z)|^2.$$
If $f$ vanishes outside a compact subset $M$ of $\mathbb{D}^n$, then \eqref{E:Hilbert-Schmidt} gives
\begin{align*}
\sum_{m\in\mathbb{Z}_{+}^n}\|H_fe_{m}\|_{2}^2 &\leq C^2_{M}\int_{M}|f(z)|^2\mathrm{d}\vartheta(z)<\infty,
\end{align*}
since $K_z(z)\leq C^2_M<\infty$ for all $z\in M$. Thus, $H_{f}$ is a Hilbert-Schmidt operator, hence it is compact. 

Suppose $f$ belongs to $L^2_{\vartheta}$ so that $\lim_{z\to\bdry}f(z)=0$. 
Then for any $\epsilon>0$, there is a compact subset $M_{\epsilon}\subset\mathbb{D}^n$ so that $|f(z)|<\epsilon$ for $\vartheta$-a.e. $z\in\mathbb{D}^n\backslash M_{\epsilon}$. 
This implies that $\|f-f\chi_{M_{\epsilon}}\|_{\infty}\leq\epsilon$. 
And hence, $\|H_{f}-H_{f\chi_{M_{\epsilon}}}\|\leq\epsilon$. As we have seen above, 
$H_{f\chi_{M_{\epsilon}}}$ is a compact operator for each $\epsilon$. 
Therefore, $H_{f}$, being the limit of a net of compact operators, is also a compact operator. 
Thus we have showed the following well known result.

\begin{proposition}\label{P:cptHankel} 
Suppose $f=h+g$, where $h\in A^2_{\vartheta}$ and $g\in L^2_{\vartheta}$ so that $\lim_{z\to\bdry}g(z)=0$. Then $H_{f}$ is a compact operator on $A^2_{\vartheta}$.
\end{proposition}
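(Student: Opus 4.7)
The plan is to reduce the proposition to a single approximation argument, most of which is already assembled in the preliminaries. Since $h\in A^2_{\vartheta}$, the product $h\varphi$ belongs to $A^2_{\vartheta}$ for every holomorphic polynomial $\varphi$, so $H_h\varphi = (I-P)(h\varphi)=0$; hence $H_f = H_h + H_g = H_g$ on polynomials, and it is enough to prove that $H_g$ extends to a compact operator.

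The first building block is the Hilbert--Schmidt estimate that the introduction already derived: if $\psi\in L^2_{\vartheta}$ vanishes $\vartheta$-a.e. outside a compact set $M\subset\opdisk^n$, then by \eqref{E:Hilbert-Schmidt} and the pointwise bound $K_z(z)\leq C_M^2$ for $z\in M$,
\begin{equation*}
\sum_{m\in\mathbb{Z}_+^n}\|H_{\psi}e_m\|_2^2 \;\leq\; \int_{\opdisk^n}|\psi(z)|^2K_z(z)\,\mathrm{d}\vartheta(z)\;\leq\; C_M^2\,\|\psi\|_2^2<\infty,
\end{equation*}
so $H_{\psi}$ is Hilbert--Schmidt, hence compact. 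I would state this explicitly as a quick lemma (or cite the preliminary computation) before moving on.

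The second building block is the norm estimate. Given $\epsilon>0$, the hypothesis $\lim_{z\to\bdry}g(z)=0$ supplies a compact set $M_{\epsilon}\subset\opdisk^n$ on whose complement $|g|<\epsilon$ $\vartheta$-a.e. Writing $g_{\epsilon}=g\,\chi_{M_{\epsilon}}$, one has $\|g-g_{\epsilon}\|_{\infty}\leq\epsilon$ in the essential-supremum sense, so $g-g_{\epsilon}$ is a bounded symbol and the standard inequality gives
\begin{equation*}
\|H_{g-g_{\epsilon}}\varphi\|_2 \leq \|(g-g_{\epsilon})\varphi\|_2 \leq \epsilon\,\|\varphi\|_2
\end{equation*}
for every holomorphic polynomial $\varphi$. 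Thus $H_{g-g_{\epsilon}}$ extends to a bounded operator of norm at most $\epsilon$, while $H_{g_{\epsilon}}$ is compact by the previous step. Consequently $H_g = H_{g_{\epsilon}} + H_{g-g_{\epsilon}}$ extends boundedly, and it lies within $\epsilon$ of a compact operator.

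To conclude, I would let $\epsilon\to 0$: the operators $H_{g_{\epsilon}}$ form a net of compact operators converging to $H_g$ in the operator norm, and since the compact operators form a norm-closed ideal in $\mathcal{B}(A^2_{\vartheta},L^2_{\vartheta}\ominus A^2_{\vartheta})$, the limit $H_g = H_f$ is compact. The only subtle point is the essential-supremum bookkeeping on $g-g_{\epsilon}$ (so that the operator-norm bound is legitimate even though $g$ itself need not be globally bounded), but this follows from the very formulation of $\lim_{z\to\bdry}g(z)=0$ given in the introduction; no real obstacle arises beyond this verification.
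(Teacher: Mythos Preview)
Your proof is correct and follows essentially the same route as the paper: reduce to $H_g$ via $H_h=0$, use the Hilbert--Schmidt estimate \eqref{E:Hilbert-Schmidt} together with the kernel bound $K_z(z)\le C_M^2$ to handle compactly supported symbols, then approximate $g$ by $g\chi_{M_\epsilon}$ in $L^\infty$ and pass to the norm limit of compact operators. The paper's argument is exactly this, presented in the paragraphs immediately preceding the proposition.
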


In the rest of the section, we study a decomposition of $L^2_{\vartheta}$ into pairwise orthogonal subspaces. If a function belongs to one of these subspaces, the corresponding Hankel operator has a simple form which we can analyze easily. This is one of the key points in our study of the compactness of Hankel operators with continuous symbols.

For any $n$-tuple $l\in\mathbb{Z}^n$, let $\mathcal{H}_{l}$ be the space of all functions $f$ in $L^2_{\vartheta}$ such that for all $\zeta\in\mathbb{T}^n$, $f(\zeta\cdot z)=\zeta^{l}f(z)$ for $\vartheta$-a.e. $z\in\opdisk^n$. Following \cite{LouhichiIEOT2006}, we call each function in $\mathcal{H}_{l}$ quasi-homogeneous of multi-degree $l$. It is clear that $\mathcal{H}_{l}$ is a closed subspace of $L^2_{\vartheta}$. Let $Q_l$ denote the orthogonal projection from $L^2_{\vartheta}$ onto $\mathcal{H}_l$. The following lemma shows that these projections are pairwise orthogonal and they constitute a partition of the identity.

\begin{lemma}\label{L:decomposition}
For $s\in\mathbb{Z}^n$ and $f\in L^2_{\vartheta}$, we have
\begin{equation}\label{Eq:Q_s}
(Q_{s}f)(z)=\int_{\mathbb{T}^n}f(z\cdot\zeta)\bar{\zeta}^{s}\mathrm{d}\sigma(\zeta),
\end{equation}
for $\vartheta$-a.e. $z\in\opdisk^n$. Furthermore, $\mathcal{H}_{l}\bot\mathcal{H}_{s}$ (which implies $Q_lQ_s=0$) whenever $l\neq s$, and $L^2_{\vartheta}=\bigoplus_{s\in\mathbb{Z}^n}Q_s(L^2_{\vartheta})=\bigoplus_{s\in\mathbb{Z}^n}\mathcal{H}_{s}$.
\end{lemma}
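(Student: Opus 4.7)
My plan is to exploit the rotational invariance of $\vartheta$ under the $\mathbb{T}^n$-action $z\mapsto\zeta\cdot z$, which follows immediately from the polar decomposition \eqref{Eq:int-polar}. This invariance shows that for each fixed $\zeta\in\mathbb{T}^n$ the operator $U_\zeta f(z):=f(\zeta\cdot z)$ is unitary on $L^2_\vartheta$, so the averaging operator
\[
T_sf(z):=\int_{\mathbb{T}^n}f(z\cdot\zeta)\bar\zeta^s\,\mathrm{d}\sigma(\zeta)
\]
is bounded on $L^2_\vartheta$ with $\|T_s\|\leq 1$.

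The first step is to identify $T_s$ with $Q_s$. A change of variable $\xi=\eta\cdot\zeta$ inside the defining integral shows $T_sf\in\mathcal{H}_s$ for every $f$, while the basic relation $\int_{\mathbb{T}^n}\zeta^{l-s}\,\mathrm{d}\sigma(\zeta)=\delta_{l,s}$ shows that $T_s$ acts as the identity on $\mathcal{H}_s$ and annihilates $\mathcal{H}_l$ for $l\neq s$. For self-adjointness, Fubini followed by the substitution $w=z\cdot\zeta$ (using the rotational invariance of $\vartheta$) gives $\langle T_sf,g\rangle=\langle f,T_sg\rangle$. Together these force $T_s$ to be the orthogonal projection onto $\mathcal{H}_s$, which establishes \eqref{Eq:Q_s} and simultaneously gives $\mathcal{H}_l\perp\mathcal{H}_s$ for $l\neq s$; the latter can also be read off directly, since for $f\in\mathcal{H}_l$ and $g\in\mathcal{H}_s$ the invariance yields $\langle f,g\rangle=\zeta^{l-s}\langle f,g\rangle$ for all $\zeta$, so averaging in $\zeta$ annihilates it.

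For the decomposition $L^2_\vartheta=\bigoplus_s\mathcal{H}_s$, I would invoke Parseval on the torus. For $f\in L^2_\vartheta$, the invariance together with Fubini gives
\[
\|f\|_2^2=\int_{\opdisk^n}\int_{\mathbb{T}^n}|f(z\cdot\zeta)|^2\,\mathrm{d}\sigma(\zeta)\,\mathrm{d}\vartheta(z),
\]
and for $\vartheta$-a.e.\ $z$ the slice $\zeta\mapsto f(z\cdot\zeta)$ lies in $L^2(\mathbb{T}^n)$ with $s$-th Fourier coefficient equal to $Q_sf(z)$. Applying Parseval on $\mathbb{T}^n$ slicewise and integrating in $z$ yields $\|f\|_2^2=\sum_s\|Q_sf\|_2^2$, which together with the pairwise orthogonality of the $\mathcal{H}_s$ produces the norm-convergent expansion $f=\sum_sQ_sf$. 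The only points demanding care are the Fubini/change-of-variable steps—verifying the self-adjointness of $T_s$ and identifying the slicewise Fourier coefficients with the global projections $Q_sf$—but no deeper obstacle arises once the rotational invariance of $\vartheta$ is in hand.
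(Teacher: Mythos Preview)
Your proposal is correct and follows essentially the same route as the paper: the paper likewise verifies that the integral defines a function in $\mathcal{H}_s$ via the torus change of variable, uses Fubini together with the rotational invariance of $\vartheta$ to show $\langle f-T_sf,g\rangle=0$ for $g\in\mathcal{H}_s$ (your self-adjointness computation specialized to $g\in\mathcal{H}_s$), proves $\mathcal{H}_l\perp\mathcal{H}_s$ via the identity $\langle f,g\rangle=\zeta^{l-s}\langle f,g\rangle$, and obtains the decomposition by the slicewise Parseval identity on $\mathbb{T}^n$. The only cosmetic difference is that you package the first step as ``$T_s$ is a self-adjoint idempotent'' while the paper writes it as ``$T_sf\in\mathcal{H}_s$ and $f-T_sf\perp\mathcal{H}_s$''.
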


\begin{proof}
Since $f$ belongs to $L^2_{\vartheta}$, the integral on the right hand side of \eqref{Eq:Q_s} is well-defined for $\vartheta$-a.e.  $z\in\opdisk^n$. For such $z$, let $f_s(z)$ be the value of the integral. For other values of $z$, let $f_{s}(z)=0$. We will show $Q_sf=f_s$ by proving that $f_s\in\mathcal{H}_s$ and $(f-f_s)\bot\mathcal{H}_s$. For $z$ and any $\gamma\in\mathbb{T}^n$, if the integral in \eqref{Eq:Q_s} is defined, by the rotation invariance of $\sigma$, we have
\begin{align*}
f_s(z\cdot\gamma) & = \int_{\mathbb{T}^n}f((z\cdot\gamma)\cdot\zeta)\bar{\zeta}^{s}\mathrm{d}\sigma(\zeta) = \int_{\mathbb{T}^n}f(z\cdot(\gamma\cdot\zeta))\bar{\zeta}^{s}\mathrm{d}\sigma(\zeta)\\
& = \int_{\mathbb{T}^n}f(z\cdot\zeta)\gamma^{s}\mathrm{d}\sigma(\zeta) = \gamma^{s}f_s(z).
\end{align*}
If the integral in \eqref{Eq:Q_s} is not defined, then $f_s(z\cdot\gamma)=\gamma^{s}f_s(z)$ because they are both zero. Therefore, $f_s$ belongs to $\mathcal{H}_s$.

Now suppose $g$ is a function in $\mathcal{H}_s$. Then
\begin{align*}
\int_{\opdisk^n}f_s(z)\bar{g}(z)\mathrm{d}\vartheta(z)
& = \int_{\opdisk^n}\int_{\mathbb{T}^n}f(z\cdot\zeta)\bar{\zeta}^{s}\bar{g}(z) \mathrm{d}\sigma(\zeta) \mathrm{d}\vartheta(z)\\
& = \int_{\mathbb{T}^n}\int_{\opdisk^n}f(z\cdot\zeta)\bar{\zeta}^{s}\bar{g}(z) \mathrm{d}\vartheta(z)\mathrm{d}\sigma(\zeta)\\
& = \int_{\mathbb{T}^n}\int_{\opdisk^n}f(z\cdot\zeta)\bar{g}(z\cdot\zeta)\mathrm{d}\vartheta(z)\mathrm{d}\sigma(\zeta)\\
&\quad\text{(since $g(z\cdot\zeta)=\zeta^{s}g(z)$ for $\vartheta$-a.e. $z$)}\\
& = \int_{\mathbb{T}^n}\int_{\opdisk^n}f(z)\bar{g}(z)\mathrm{d}\vartheta(z)\mathrm{d}\sigma(\zeta)\\
& = \int_{\opdisk^n}f(z)\bar{g}(z)\mathrm{d}\vartheta(z).
\end{align*}
This shows that $\langle f-f_s,g\rangle=0$ for all $g\in\mathcal{H}_s$. Since $f_s$ belongs to $\mathcal{H}_{s}$, we conclude that $f_s=Q_{s}f$.

Next, suppose $l\neq k$. Let $f\in\mathcal{H}_l$ and $g\in\mathcal{H}_k$. For any $\zeta\in\mathbb{T}^n$, we have
\begin{align*}
\int_{\opdisk^n}\zeta^{l-k} f(z)\bar{g}(z)\mathrm{d}\vartheta(z) & = \int_{\opdisk^n}f(z\cdot\zeta)\bar{g}(z\cdot\zeta)\mathrm{d}\vartheta(z)\\
& = \int_{\opdisk^n}f(z)\bar{g}(z)\mathrm{d}\vartheta(z).
\end{align*}
Since $l\neq k$, we conclude that $\displaystyle\int_{\opdisk^n}f(z)\bar{g}(z)\mathrm{d}\vartheta(z)=0$. Thus, $\mathcal{H}_l\bot\mathcal{H}_k$.

To show $L^2_{\vartheta}=\bigoplus_{l\in\mathbb{Z}^n}\mathcal{H}_{l}$, it suffices to show that for any $f\in L^2_{\vartheta}$, the identity $\displaystyle\|f\|_{2}^2 = \sum_{l\in\mathbb{Z}^n}\|Q_l(f)\|_{2}^2$ holds true. Indeed, for $f\in L^2_{\vartheta}$,
\begin{align*}
\|f\|_{2}^2 & = \int_{\opdisk^n}|f(z)|^2\mathrm{d}\vartheta(z)\\
& = \int_{\opdisk^n}\int_{\mathbb{T}^n}|f(z\cdot\zeta)|^2 \mathrm{d}\sigma(\zeta)\mathrm{d}\vartheta(z)\\
& = \int_{\opdisk^n}\sum_{l\in\mathbb{Z}^n}\Big|\int_{\mathbb{T}^n}f(z\cdot\zeta)\bar{\zeta}^l \mathrm{d}\sigma(\zeta)\Big|^2 \mathrm{d}\vartheta(z)\\
&\quad\text{(since $\{\zeta^l: l\in\mathbb{Z}^n\}$ is an orthonormal basis for $L^2(\mathbb{T}^n,\sigma)$)}\\
& = \sum_{l\in\mathbb{Z}^n}\int_{\opdisk^n}\Big|\int_{\mathbb{T}^n}f(z\cdot\zeta)\bar{\zeta}^l \mathrm{d}\sigma(\zeta)\Big|^2 \mathrm{d}\vartheta(z)\\
& = \sum_{l\in\mathbb{Z}^n}\|Q_l(f)\|_{2}^2.\qedhere
\end{align*}
\end{proof}

It follows from the proof of Lemma \ref{L:decomposition} that for each $s\in\mathbb{Z}^n$, there is a function $f_s$ such that $f_s(z\cdot\gamma)=\gamma^{s}f_s(z)$ for \emph{all} $z\in\opdisk^n$ and \emph{all} $\zeta\in\mathbb{T}^n$ and $Q_{s}(f)(z)=f_s(z)$ for $\vartheta$-a.e. $z$. If $f$ is continuous on the closed polydisc $\overline{\mathbb{D}}^n$, then the integral in \eqref{Eq:Q_s} is well-defined for all $z$ in $\cldisk^n$ and $f_s$ is also continuous on $\cldisk^n$. We have seen that the series $\sum_{s\in\mathbb{Z}^n}f_s$ converges to $f$ in the $L^2_{\vartheta}$-norm. In general, for $f$ in $C(\cldisk^n)$, the series does not converge uniformly to $f$. However, the Ces{\`a}ro means of the functions $\{f_{s}: s\in\mathbb{Z}^n\}$ do converge uniformly to $f$ as we will see next.

For any integer $N\geq 1$, the Ces{\`a}ro mean $\Lambda_{N}(f)$ is defined by the formula
\begin{align*}
&\Lambda_{N}(f)(z)\\
& = \sum_{|s_1|,\ldots,|s_n|\leq N}\big(1-\frac{|s_1|}{N+1}\big)\cdots\big(1-\frac{|s_n|}{N+1}\big)f_{s_1,\ldots,s_n}(z)\\
& = \int_{\mathbb{T}^n}\Big\{\sum_{|s_1|,\ldots,|s_n|\leq N}\big(1-\frac{|s_1|}{N+1}\big)\cdots\big(1-\frac{|s_n|}{N+1}\big)\zeta_1^{s_1}\cdots\zeta_n^{s_1}\Big\}f(z\cdot\zeta)\mathrm{d}\sigma(\zeta)\\
& = \int_{\mathbb{T}^n}\mathbf{F}_{N}(\zeta_1)\cdots\mathbf{F}_{N}(\zeta_n)f(z\cdot\zeta)\mathrm{d}\sigma(\zeta),
\end{align*}
where $\mathbf{F}_{N}$ is the $N$th Fej{\'e}r's kernel. It now follows from a well known result in harmonic analysis (see, for example, Sections 2.2 and 9.2 in \cite{KatznelsonCUP2004}) that $\Lambda_{N}(f)\to f$ uniformly on $\cldisk^n$ as $N\to\infty$ if $f$ is continuous on $\cldisk^n$.

\section{Hankel operators with quasi-homogeneous symbols}\label{S:Hankel-quasi}

Recall from Section \ref{S:prel} that $A^2_{\vartheta}$ has the standard orthonormal basis consisting of monomials $\{e_{m}(z)=\frac{z^m}{\sqrt{c_m}}: m\in\mathbb{Z}_{+}^n\}$, where $$c_m = \int_{[0,1)^n}r_1^{2m_1}\cdots r_n^{2m_n}\mathrm{d}\mu(r_1,\ldots,r_n).$$
We also recall that for $l\in\mathbb{Z}^n$, $Q_{l}$ is the orthogonal projection from $L^2_{\vartheta}$ onto the subspace $\mathcal{H}_{l}$ of quasi-homogeneous functions of multi-degree $l$.

For two $n$-tuples of integers $l=(l_1,\ldots,l_n)$ and $s=(s_1,\ldots,s_n)$, we write $l\succeq s$ if $l_j\geq s_j$ for all $1\leq j\leq n$ and $l\nsucceq s$ if otherwise. We will also use $0$ to denote $(0,\ldots,0)$. For $m\in\mathbb{Z}_{+}^{n}$ and $l\in\mathbb{Z}^n$, $Q_{l}(e_m)$ is either $0$ (when $l\neq m)$ or $e_{m}$ (when $l=m$). Thus, $Q_{l}(A^2_{\vartheta})=\{0\}$ if $l\nsucceq 0$ and $Q_{l}(A^2_{\vartheta})=\mathbb{C}e_l$ if $l\succeq 0$. This shows that $A^2_{\vartheta}$ is an invariant subspace for $Q_{l}$, hence it is also a reducing subspace since $Q_l$ is a projection. Let $P$ be the orthogonal projection from $L^2$ onto $A^2_{\vartheta}$.  Then we have $PQ_l=Q_lP$ and this in turn shows that $\mathcal{H}_{l}$ is a reducing subspace for $P$.

\begin{lemma}\label{L:diagHankel}
Let $s$ be in $\mathbb{Z}^n$. Suppose $f$ is a bounded function on $\opdisk^n$ so that we have $f(r_1\zeta_1,\ldots,r_n\zeta_n)=\zeta^{s}f(r_1,\ldots,r_n)$ for all $z=(r_1\zeta_1,\ldots,r_n\zeta_n)$ in $\opdisk^n$. Then $H^{*}_{f}H_{f}$ is a diagonal operator with respect to the standard orthonormal basis. The eigenvalues of $H^{*}_{f}H_{f}$ are given by
\begin{align*}
\lambda_{m} & = \frac{1}{c_{m}}\int_{[0,1)^n}|f(t_1,\ldots,t_n)|^2t_1^{2m_1}\cdots t_n^{2m_n}\mathrm{d}\mu(t_1,\ldots,t_n)
\end{align*}
if $m+s\nsucceq 0$ and
\begin{align*}
\lambda_{m} & = \frac{1}{c_{m}}\int_{[0,1)^n}|f(t_1,\ldots,t_n)|^2t_1^{2m_1}\cdots t_n^{2m_n}\mathrm{d}\mu(t_1,\ldots,t_n)\\
& \quad\ - \frac{1}{c_{m}c_{m+s}}\Big|\int_{[0,1)^n}f(t_1,\ldots,t_n)t_1^{2m_1+s_1}\cdots t_n^{2m_n+s_n}\mathrm{d}\mu(t_1,\ldots,t_n)\Big|^2
\end{align*}
if $m+s\succeq 0$.
\end{lemma}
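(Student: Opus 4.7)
The plan is to exploit the quasi-homogeneity decomposition $L^2_\vartheta = \bigoplus_l \mathcal{H}_l$ to show that $H_f$ maps each basis vector $e_m$ into a single subspace $\mathcal{H}_{m+s}$; then orthogonality of the $\mathcal{H}_l$'s will immediately give diagonality of $H_f^*H_f$, and the eigenvalues will be computed by a direct polar-coordinate integration.

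First I would record that $f$ is quasi-homogeneous of degree $s$, i.e.\ $f\in\mathcal{H}_s$, and that pointwise multiplication by a function in $\mathcal{H}_s$ sends $\mathcal{H}_l$ into $\mathcal{H}_{l+s}$, since $(f\varphi)(z\cdot\zeta)=\zeta^s f(z)\cdot\zeta^l\varphi(z)=\zeta^{l+s}(f\varphi)(z)$. In particular $fe_m\in\mathcal{H}_{m+s}$. Next, because $\mathcal{H}_l$ is a reducing subspace for $P$ (as noted just before the lemma), the projection $P(fe_m)$ also lies in $\mathcal{H}_{m+s}$, and $\mathcal{H}_{m+s}\cap A^2_\vartheta$ equals $\{0\}$ if $m+s\nsucceq 0$ and $\mathbb{C}e_{m+s}$ if $m+s\succeq 0$. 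Thus
\[
H_f e_m = fe_m - P(fe_m) = \begin{cases} fe_m & \text{if } m+s\nsucceq 0,\\ fe_m - \langle fe_m, e_{m+s}\rangle\, e_{m+s} & \text{if } m+s\succeq 0,\end{cases}
\]
and in either case $H_f e_m \in \mathcal{H}_{m+s}$.

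From here, diagonality is automatic: for $m\neq m'$ we have $m+s\neq m'+s$, so $\langle H_f^*H_f e_m, e_{m'}\rangle = \langle H_f e_m, H_f e_{m'}\rangle = 0$ by orthogonality of $\mathcal{H}_{m+s}$ and $\mathcal{H}_{m'+s}$ from Lemma \ref{L:decomposition}. So it remains to compute $\lambda_m = \|H_f e_m\|_2^2$. Using the polar identity \eqref{Eq:int-polar} together with $|f(r\cdot\zeta)|=|f(r)|$, the quantity $\|fe_m\|_2^2 = c_m^{-1}\int|f(z)|^2|z^m|^2\,d\vartheta$ collapses to
\[
\|fe_m\|_2^2 = \frac{1}{c_m}\int_{[0,1)^n} |f(t)|^2\, t_1^{2m_1}\cdots t_n^{2m_n}\,\polyint{n}{t},
\]
which already is the eigenvalue formula in the case $m+s\nsucceq 0$. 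In the case $m+s\succeq 0$, I would subtract $|\langle fe_m, e_{m+s}\rangle|^2$; expanding via \eqref{Eq:int-polar} and using $f(r\cdot\zeta)(r\cdot\zeta)^m\overline{(r\cdot\zeta)^{m+s}} = f(r)\,r^{2m+s}\,\zeta^{s+m-(m+s)} = f(r)\,r^{2m+s}$, the $\zeta$-integral equals $1$ and the inner product reduces to $(c_m c_{m+s})^{-1/2}\int f(t)\,t^{2m+s}\,d\mu(t)$, which supplies the remaining term.

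The only potential obstacle is verifying that $P(fe_m)$ really lives in $\mathcal{H}_{m+s}$, but this is immediate once one observes that $\mathcal{H}_l$ reduces $P$, which the paper has already established. Everything else is bookkeeping in polar coordinates, and no convergence or domain issues arise because $f$ is bounded, so $fe_m\in L^2_\vartheta$.
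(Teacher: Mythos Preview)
Your proposal is correct and follows essentially the same route as the paper's proof: you show $fe_m\in\mathcal{H}_{m+s}$, use that $\mathcal{H}_{m+s}$ reduces $P$ to identify $P(fe_m)$ as either $0$ or $\langle fe_m,e_{m+s}\rangle e_{m+s}$, deduce diagonality of $H_f^*H_f$ from the mutual orthogonality of the $\mathcal{H}_l$'s, and then compute $\lambda_m=\|H_fe_m\|_2^2$ via the polar-coordinate formula. The only cosmetic point is that at this stage of the paper $\mu$ is not yet assumed to be a product measure, so the integrals should be written with $\mathrm{d}\mu(t_1,\ldots,t_n)$ rather than $\polyint{n}{t}$.
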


\begin{proof}
For any $m\in\mathbb{Z}_{+}^n$, $fe_m$ belongs to $\mathcal{H}_{s+m}$, which is an invariant subspace for $P$. Therefore, $P(fe_m)$ and $H_{f}e_m = fe_m-P(fe_m)$ also belong to $\mathcal{H}_{s+m}$. We have
\begin{align*}
P(fe_m) & = \sum_{k\in\mathbb{Z}_{+}^n}\langle P(fe_m),e_k\rangle e_{k} = \begin{cases}
0 & \text{ if } s+m\nsucceq 0,\\
\langle fe_m,e_{s+m}\rangle e_{s+m} & \text{ if } s+m\succeq 0.
\end{cases}
\end{align*}
Now for $k\neq m$, $\langle H^{*}_{f}H_{f}e_m,e_k\rangle = \langle H_{f}e_m,H_{f}e_k\rangle=0$ since $H_{f}e_m\in\mathcal{H}_{m+s}$, $H_{f}e_{k}\in\mathcal{H}_{k+s}$ and $\mathcal{H}_{m+s}\bot\mathcal{H}_{k+s}$ by Lemma \ref{L:decomposition}. Thus, $H^{*}_{f}H_{f}$ is a diagonal operator with respect to the standard orthonormal basis $\{e_m: m\in\mathbb{Z}_{+}^n\}$. The eigenvalues of $H^{*}_{f}H_{f}$ are given by
\begin{align*}
\lambda_{m} & = \langle H^{*}_{f}H_{f}e_m,e_m\rangle = \|H_{f}e_m\|_{2}^2\\
& = \|fe_m\|_{2}^2 - \|P(fe_m)\|_{2}^2\\
& = \begin{cases}
\|fe_m\|_{2}^2 & \text{ if } s+m\nsucceq 0,\\
\|fe_m\|_{2}^2 - |\langle fe_m,e_{s+m}\rangle|^2 & \text{ if } s+m\succeq 0,
\end{cases}
\end{align*}
for $m\in\mathbb{Z}_{+}^n$. Since $$\|fe_m\|_{2}^2 = \frac{1}{c_m}\int_{[0,1)^n}|f(t_1,\ldots,t_n)|^2t_1^{2m_1}\cdots t_n^{2m_n}\mathrm{d}\mu(t_1,\ldots,t_n),$$ and $$\langle fe_m,e_{s+m}\rangle=\frac{1}{\sqrt{c_m c_{s+m}}}\int_{[0,1)^n}f(t_1,\ldots,t_n)t_1^{2m_1+s_1}\cdots t_n^{2m_n+s_n}\mathrm{d}\mu(t_1,\ldots,t_n),$$
the conclusion of the lemma follows.
\end{proof}

\begin{remark}\label{R:diagHankel}
Let us consider the case $n=1$ and $f(z)=z^u\bar{z}^v$ for integers $u,v\geq 0$. We see that $f$ belongs to $\mathcal{H}_{s}$ with $s=u-v$. From Lemma \ref{L:diagHankel}, $H^{*}_{f}H_{f}$ is a diagonal operator with eigenvalues $\lambda_{m}$ for $m\in\mathbb{Z}_{+}$. For all positive integers $m\geq v-u$, we have
\begin{align*}
\lambda_{m} = \frac{\int_{[0,1)}t^{2m+2(u+v)}\mathrm{d}\mu(t)}{\int_{[0,1)}t^{2m}\mathrm{d}\mu(t)} - \dfrac{\big|\int_{[0,1)}t^{2m+2u}\mathrm{d}\mu(t)\big|^2}{(\int_{[0,1)}t^{2m}\mathrm{d}\mu(t))(\int_{[0,1)}t^{2m+2(u-v)}\mathrm{d}\mu(t))}.
\end{align*}
Since $\mu([r,1))>0$ for all $0<r<1$, it can be showed that $\lim_{m\to\infty}\lambda_{m}=0$ (see Lemma \ref{L:limit} below). Therefore, $H^{*}_{f}H_{f}$ is a compact operator, which implies that $H_{f}$ is also a compact operator. Thus, for any polynomial $p=p(z,\bar{z})$, $H_{p}$ is compact. Since any function $g$ in $C(\cldisk^n)$ can be uniformly approximated by polynomials, we conclude that $H_{g}$ is also a compact operator.
\end{remark}

Our characterization of compactness of Hankel operators when $n\geq 2$ depends on the following lemma. For a sketch of its proof when $\delta=\beta=(0,\ldots,0)$, see Lemma 2.4 in \cite{LePAMS2009}. The proof for arbitrary $\delta, \beta$ is similar.

\begin{lemma}\label{L:limit}
Let $\mu_1,\ldots,\mu_N$ be positive measures on $[0,1)$ so that $\mu_j([r,1))>0$ for all $0<r<1$, all $1\leq j\leq N$. Suppose $\varphi$ is a function on $[0,1)^N$ so that $\lim\limits_{(r_1,\ldots,r_N)\to (1,\ldots,1)}\varphi(r_1,\ldots,r_N)=\alpha$. Then for any $N$-tuples of real numbers $\delta=(\delta_1,\ldots,\delta_N)$ and $\beta=(\beta_1,\ldots,\beta_N)$, we have
$$\lim\limits_{(m_1,\ldots,m_N)\to(\infty,\ldots,\infty)}\dfrac{\int_{[0,1)^N}\varphi(r)r_1^{m_1+\delta_1}\cdots r_N^{m_N+\delta_N}\mathrm{d}\mu(r)}{\int_{[0,1)^N}r_1^{m_1+\beta_1}\cdots r_N^{m_N+\beta_N}\mathrm{d}\mu(r)}=\alpha.$$
\end{lemma}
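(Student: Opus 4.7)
The plan is to exploit the product structure $\mathrm{d}\mu=\mathrm{d}\mu_1\cdots\mathrm{d}\mu_N$ to factor the integrals when $\varphi\equiv 1$, and then handle a general $\varphi$ by the splitting $\varphi=\alpha+(\varphi-\alpha)$ together with the fact that, as $m\to\infty$, the weight $r^{m+\beta}\,\mathrm{d}\mu$ concentrates its mass near the corner $(1,\ldots,1)$.

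The technical core is a one-dimensional estimate: if $\nu$ is a positive finite measure on $[0,1)$ with $\nu([\rho,1))>0$ for every $\rho<1$, and $\delta,\beta\in\mathbb{R}$ are fixed, then for any $\rho<1$,
\[
\frac{\int_{[0,\rho)}r^{m+\delta}\,\mathrm{d}\nu(r)}{\int_{[0,1)}r^{m+\beta}\,\mathrm{d}\nu(r)}\longrightarrow 0\quad\text{as } m\to\infty.
\]
To prove this, pick any $\rho_0\in(\rho,1)$ and, for $m$ sufficiently large that $m+\delta\geq 0$ and $m+\beta\geq 0$, estimate
\[
\int_{[0,\rho)}r^{m+\delta}\,\mathrm{d}\nu\leq\rho^{m+\delta}\,\nu([0,1)),\qquad \int_{[0,1)}r^{m+\beta}\,\mathrm{d}\nu\geq\rho_0^{m+\beta}\,\nu([\rho_0,1)).
\]
Since $\nu([\rho_0,1))>0$, the ratio is bounded by a constant multiple of $(\rho/\rho_0)^m$ and thus decays to $0$. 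This immediately yields the $\varphi\equiv 1$ asymptotic in one dimension,
\[
\frac{\int_{[0,1)}r^{m+\delta}\,\mathrm{d}\nu}{\int_{[0,1)}r^{m+\beta}\,\mathrm{d}\nu}\longrightarrow 1,
\]
by splitting the numerator at $\rho$: on $[\rho,1)$ the factor $r^{\delta-\beta}$ is uniformly close to $1$, while the $[0,\rho)$ part is negligible by the above.

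For the full statement, fix $\epsilon>0$ and choose $\rho<1$ so that $|\varphi-\alpha|<\epsilon$ on $U_\rho:=[\rho,1)^N$. The product structure of $\mu$ gives
\[
\frac{\int_{[0,1)^N}r^{m+\delta}\,\mathrm{d}\mu}{\int_{[0,1)^N}r^{m+\beta}\,\mathrm{d}\mu}=\prod_{j=1}^N\frac{\int_{[0,1)}r_j^{m_j+\delta_j}\,\mathrm{d}\mu_j}{\int_{[0,1)}r_j^{m_j+\beta_j}\,\mathrm{d}\mu_j}\longrightarrow 1
\]
by the one-dimensional asymptotic applied coordinatewise. Writing the quantity under the limit as
\[
\alpha\cdot\frac{\int r^{m+\delta}\,\mathrm{d}\mu}{\int r^{m+\beta}\,\mathrm{d}\mu}+\frac{\int(\varphi-\alpha)\,r^{m+\delta}\,\mathrm{d}\mu}{\int r^{m+\beta}\,\mathrm{d}\mu},
\]
it suffices to bound the second term. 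On $U_\rho$ the integrand is at most $\epsilon\,r^{m+\delta}$, so its contribution is at most $\epsilon\cdot\int r^{m+\delta}\,\mathrm{d}\mu/\int r^{m+\beta}\,\mathrm{d}\mu\to\epsilon$. On $U_\rho^c=\bigcup_j\{r_j<\rho\}$, assuming $\varphi$ is bounded (as in the intended applications, where $\varphi$ arises from a continuous function on $\cldisk^n$), the contribution is dominated by a constant times
\[
\sum_{j=1}^N\frac{\int_{[0,\rho)}r_j^{m_j+\delta_j}\,\mathrm{d}\mu_j}{\int_{[0,1)}r_j^{m_j+\beta_j}\,\mathrm{d}\mu_j}\prod_{i\neq j}\frac{\int_{[0,1)}r_i^{m_i+\delta_i}\,\mathrm{d}\mu_i}{\int_{[0,1)}r_i^{m_i+\beta_i}\,\mathrm{d}\mu_i},
\]
whose first factor in each term decays to $0$ by the core one-dimensional estimate while the remaining factors tend to $1$. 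Letting $\epsilon\to 0$ yields the limit $\alpha$.

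The main obstacle is essentially careful bookkeeping: decoupling the coordinates through the product measure and verifying that the exponential decay $(\rho/\rho_0)^m\to 0$ dominates any fixed polynomial-in-$m$ prefactor arising from the shifts $\delta,\beta$. The only subtlety is ensuring $m$ is taken large enough that all the shifted exponents $m_j+\delta_j$ and $m_j+\beta_j$ are nonnegative, so the monotonicity bound $r^{m_j+\delta_j}\leq\rho^{m_j+\delta_j}$ on $[0,\rho)$ is legitimate; since $\delta$ and $\beta$ are fixed, this holds automatically in the limit.
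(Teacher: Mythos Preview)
The paper does not actually prove this lemma: it only points to Lemma~2.4 in \cite{LePAMS2009} for a sketch when $\delta=\beta=0$ and asserts that the general case is similar. Your argument fills in precisely those details---the one-dimensional concentration estimate, the product factorisation for $\varphi\equiv 1$, and the $U_\rho$/$U_\rho^c$ splitting for general $\varphi$---and is correct under the boundedness hypothesis you impose. This is the standard approach and almost certainly what the cited reference does as well.

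Your remark about boundedness deserves emphasis rather than apology: the lemma as literally stated, with no size hypothesis on $\varphi$ beyond the existence of the limit at $(1,\ldots,1)$, is false for $N\geq 2$. For example, take $N=2$, $\mu_1=\mu_2$ Lebesgue on $[0,1)$, $\delta=\beta=0$, and $\varphi(r_1,r_2)=(1-r_2)^{-1/2}\chi_{\{r_1<1/2\}}$. Then $\varphi\to 0$ at $(1,1)$ and all the integrals are finite, yet the quotient is asymptotic to $(1/2)^{m_1+1}\sqrt{\pi m_2}$, which along the sequence $(m_1,m_2)=(k,16^k)$ tends to $+\infty$ rather than $0$. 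So the boundedness assumption you insert is a necessary correction, not merely a convenience. Every application of the lemma in the paper (Remark~\ref{R:diagHankel} and Theorem~\ref{T:cpt-quasi}) invokes it with $\varphi$ built from a continuous function on $\cldisk^n$, hence bounded, so nothing downstream is harmed; but you are right to flag the hypothesis explicitly.
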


In the rest of the paper, we will consider only measures $\mu$ of the form $\mathrm{d}\mu(r_1,\ldots,r_n)=\mathrm{d}\mu_1(r_1)\cdots\mathrm{d}\mu_n(r_n)$, where $\mu_1,\ldots,\mu_n$ are regular Borel probability measures on the interval $[0,1)$ such that $\mu_j([r,1))>0$ for all $0<r<1$ and $1\leq j\leq n$. Recall that $\vartheta$ is the measure on $\mathbb{D}^n$ that is related to $\mu$ by equation \eqref{Eq:int-polar}. We now define a measure $\gamma$ on the topological boundary $\partial\mathbb{D}^n$ associated with $\vartheta$. It is clear that $\partial\mathbb{D}^n$ is the disjoint union of $2^n-1$ parts of the form $A_1\times\cdots\times A_n$, where $A_j$ is either $\mathbb{T}$ or $\mathbb{D}$ and not all are $\mathbb{D}$. Suppose $W=A_1\times\cdots\times A_n$ is a part of $\partial\mathbb{D}^n$. Let $1\leq j_1<\cdots<j_p\leq n$ be integers so that $A_{j_1}=\cdots=A_{j_p}=\mathbb{D}$ and $A_j=\mathbb{T}$ if $j\notin\{j_1,\ldots,j_p\}$. We define $\gamma$ to be the unique regular Borel measure on $W$ that satisfies 
\begin{align*}
& \int_{W}f(w)\mathrm{d}\gamma(w)\\
& = \int_{[0,1)^{p}}\Big\{\int_{\mathbb{T}^n}f(\zeta_1,\ldots,r_{j_1}\zeta_{j_1}\ldots,r_{j_{p}}\zeta_{j_p},\ldots,\zeta_n)\mathrm{d}\sigma(\zeta)\Big\}\mathrm{d}\mu_{j_1}(r_{j_1})\cdots\mathrm{d}\mu_{j_p}(r_{j_p})
\end{align*}
for all $f\in C_{c}(W)$. 

The following theorem characterizes compact Hankel operators with continuous quasi-homogeneous symbols when $n\geq 2$.

\begin{theorem}\label{T:cpt-quasi}
Suppose $n\geq 2$ and $s\in\mathbb{Z}^n$. Suppose $f\in\mathcal{H}_{s}$ is  continuous on $\cldisk^n$ such that $H_{f}$ is compact. If $s\succeq 0$, then $f(z)=f(1,\ldots,1)z^s$ for $\gamma$-a.e. $z$ in $\partial\opdisk^n$. If $s\nsucceq 0$, then $f(z)=0$ for $\gamma$-a.e. $z$ in $\partial{\opdisk^n}$.
\end{theorem}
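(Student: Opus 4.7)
The plan is to extract the diagonal eigenvalues $\lambda_m$ of $H_f^*H_f$ from Lemma~\ref{L:diagHankel} and exploit the fact that compactness of $H_f$ forces $\lambda_m \to 0$ whenever $m$ leaves every finite subset of $\mathbb{Z}_{+}^n$. Because $\mu$ is a product measure, one may fix some coordinates of $m$ and send the rest to infinity; this recovers boundary values of $f$ on each part of $\bdry$. Concretely, for each proper subset $J \subsetneq \{1,\ldots,n\}$, set $m_j = 0$ for $j \in J$ and let $m_j \to \infty$ for $j \in J^c$. This is a sequence in $\mathbb{Z}_{+}^n$ with unbounded norm, so $\lambda_m \to 0$ along it.

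Applying Lemma~\ref{L:limit} in the $J^c$-variables (with $t_J$ as a parameter) together with dominated convergence in the $t_J$ integration, I expect
\begin{equation*}
\frac{1}{c_m}\int_{[0,1)^n} |f(t)|^2 t^{2m}\,\mathrm{d}\mu(t) \longrightarrow \int_{[0,1)^{|J|}} |f(t_J,\mathbf{1}_{J^c})|^2\,\mathrm{d}\mu_J(t_J),
\end{equation*}
and, provided $s_j \geq 0$ for every $j \in J$ (so that $m+s \succeq 0$ eventually),
\begin{equation*}
\frac{1}{c_m c_{m+s}}\Big|\int f(t)\,t^{2m+s}\,\mathrm{d}\mu(t)\Big|^2 \longrightarrow \frac{\bigl|\int t_J^{s_J} f(t_J,\mathbf{1}_{J^c})\,\mathrm{d}\mu_J(t_J)\bigr|^2}{\int t_J^{2s_J}\,\mathrm{d}\mu_J(t_J)},
\end{equation*}
where $s_J = (s_j)_{j\in J}$, $\mu_J = \prod_{j\in J}\mu_j$, and $(t_J,\mathbf{1}_{J^c})$ denotes the $n$-tuple with $t_j$ in position $j\in J$ and $1$ in position $j\in J^c$. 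If instead some $j \in J$ has $s_j < 0$, then $m+s \nsucceq 0$ throughout and only the first term appears in $\lambda_m$. When $s \succeq 0$, the difference of the two displayed limits equals precisely the Cauchy-Schwarz defect of $f(\cdot,\mathbf{1}_{J^c})$ against $t_J^{s_J}$ in $L^2(\mu_J)$; its vanishing forces $f(t_J,\mathbf{1}_{J^c}) = c_J\, t_J^{s_J}$ for $\mu_J$-a.e.\ $t_J$. Both sides are continuous in $t_J$, so equality extends to $\operatorname{supp}\mu_J$; letting $t_J \to (1,\ldots,1)$ through this support, which accumulates at $(1,\ldots,1)$ by hypothesis on $\mu$, pins down $c_J = f(1,\ldots,1)$. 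Since continuity promotes the $\vartheta$-a.e.\ quasi-homogeneity of $f$ to an everywhere identity on $\cldisk^n$, a point $z$ in the part of $\bdry$ whose $\opdisk$-factors are indexed by $J$ decomposes as $z = \zeta\cdot(r_J,\mathbf{1}_{J^c})$ with $\zeta \in \mathbb{T}^n$, yielding $f(z) = \zeta^s f(r_J,\mathbf{1}_{J^c}) = f(1,\ldots,1)\,z^s$ for $\gamma$-a.e.\ such $z$.

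For $s \nsucceq 0$, choose $j_0$ with $s_{j_0} < 0$. Applying the limit recipe with $J = \{j_0\}$ makes $m+s \nsucceq 0$ (since $m_{j_0}+s_{j_0}=s_{j_0}<0$), so only the first term survives and its vanishing forces $f(t_{j_0},\mathbf{1}_{\{j_0\}^c}) = 0$ for $\mu_{j_0}$-a.e.\ $t_{j_0}$; continuity plus support accumulating at $1$ then yields $f(1,\ldots,1) = 0$. For a general proper $J$: if $J$ contains an index with negative $s$-component, setting $m_j=0$ there makes $m+s\nsucceq 0$ and the first-term-only argument gives $f(t_J,\mathbf{1}_{J^c}) = 0$ $\mu_J$-a.e.\ directly; otherwise $s_J \succeq 0$ and the argument of the previous paragraph produces $f(t_J,\mathbf{1}_{J^c}) = c_J t_J^{s_J}$ with $c_J = f(1,\ldots,1) = 0$. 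Translating through quasi-homogeneity gives $f = 0$ $\gamma$-a.e.\ on each part of $\bdry$. The main obstacle is exactly this last sub-case where $J$ avoids every index with $s_j<0$: Cauchy-Schwarz only furnishes proportionality, not vanishing, so one must first bootstrap $f(1,\ldots,1) = 0$ from the auxiliary choice $J=\{j_0\}$ before the general conclusion becomes available.
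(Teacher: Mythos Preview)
Your proposal is correct and follows essentially the same strategy as the paper: extract the diagonal eigenvalues from Lemma~\ref{L:diagHankel}, send a subset of the coordinates of $m$ to infinity via Lemma~\ref{L:limit}, invoke the equality case of Cauchy--Schwarz, and bootstrap $f(1,\ldots,1)=0$ when $s\nsucceq 0$ from a single auxiliary choice of $J$. The one noteworthy variation is that you freeze $m_j=0$ on $J$ whereas the paper freezes $m_j=\max\{0,-s_j\}$; your choice forces $m+s\nsucceq 0$ whenever $J$ contains an index with $s_j<0$, so the first-term-only formula yields $\int |f(t_J,\mathbf{1}_{J^c})|^2\,\mathrm{d}\mu_J=0$ directly and you sidestep the paper's separate treatment of the degenerate points where some $t_i=0$.
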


\begin{proof}
By the remark after the proof of Lemma \ref{L:decomposition}, we may assume that $f(\zeta_1 z_1,\ldots,\zeta_n z_n)=\zeta^{s}f(z_1,\ldots,z_n)$ for all $\zeta\in\mathbb{T}^n$ and all $z\in\cldisk^n$. In particular, for all $z=(r_1\zeta_1,\ldots,r_n\zeta_n)$ in $\cldisk^n$, we have 
\begin{equation}\label{Eq:quasi-homo}
f(r_1\zeta_1,\ldots,r_n\zeta_n)=\zeta^{s}f(r_1,\ldots,r_n).
\end{equation}

Suppose $1\leq j<n$. Put $m_1=\max\{0,-s_1\},\ldots, m_j=\max\{0,-s_j\}$. Since $H_{f}$ is compact, $H^{*}_{f}H_{f}$ is also compact. By Lemma \ref{L:diagHankel}, $H_{f}^{*}H_{f}$ is diagonalizable and its eigenvalues are $\lambda_{m}$'s for $m=(m_1,\ldots,m_j,m_{j+1},\ldots,m_n)$. Therefore, $\lim_{(m_{j+1},\ldots,m_{n})\to (\infty,\ldots,\infty)}\lambda_{m}=0$. Using the formula for $\lambda_m$ (when $m+s\succeq 0$) in Lemma \ref{L:diagHankel} together with Lemma \ref{L:limit} and the fact that $c_{m}=\int_{[0,1)^n}t_1^{2m_1}\cdots t_n^{2m_n}\polyint{n}{t}$, we conclude that
\begin{align*}
&\int_{[0,1)^{j}}|f(t_1,\ldots,t_j,1,\ldots,1)|^2t_1^{2m_1}\cdots t_j^{2m_j}\polyint{j}{t}\\
&\quad = \dfrac{\Big|\int_{[0,1)^{j}}f(t_1,\ldots,t_j,1,\ldots,1)t_1^{2m_1+s_1}\cdots t_j^{2m_j+s_j}\polyint{j}{t}\Big|^2}{\int_{[0,1)^{j}}t_1^{2m_1+2s_1}\cdots t_j^{2m_j+2s_j}\polyint{j}{t}}.
\end{align*}
Let $F(t)=f(t_1,\ldots,t_j,1,\ldots,1)t_1^{m_1}\cdots t_j^{m_j}$ and $G(t)=t_1^{m_1+s_1}\cdots t_j^{m_j+s_j}$ for $t=(t_1,\ldots, t_j)\in [0,1)^j$. Then the above identity shows that 
\begin{align*}
& \Big\{\int_{[0,1)^j}|F(t)|^2\polyint{j}{t}\Big\}\Big\{\int_{[0,1)^n}G^2(t)\polyint{j}{t}\Big\}\\
&\quad\quad\quad = \Big|\int_{[0,1)^j}(FG)(t)\polyint{j}{t}\Big|^2.
\end{align*}
This means that Holder's inequality applied to $F$ and $G$ is in fact an equality. Therefore, we have $F(t)=\alpha G(t)$, or equivalently,
\begin{equation}\label{Eq:const-func}
f(t_1,\ldots,t_j,1,\ldots,1)t_1^{m_1}\cdots t_j^{m_j}=\alpha t_1^{m_1+s_1}\cdots t_j^{m_j+s_j}
\end{equation}
for $\mu_1\times\cdots\times\mu_j$-a.e. $t=(t_1,\ldots,t_j)\in [0,1)^j$, where $\alpha$ is a constant. Since $1$ belongs to the support of all the measures $\mu_1,\ldots,\mu_j$, we may let $t_1=\cdots=t_j=1$ to obtain $\alpha=f(1,\ldots,1)$. Recall that in \eqref{Eq:const-func}, $m_1=\max\{0,-s_1\},\ldots, m_j=\max\{0,-s_j\}$.

Suppose first $s_1,\ldots, s_n\geq 0$. Since $m_1=\cdots=m_j=0$, we obtain from \eqref{Eq:const-func} that $f(t_1,\ldots,t_j,1,\ldots,1)=f(1,\ldots,1)t_1^{s_1}\cdots t_j^{s_j}$ for $\mu_1\times\cdots\times\mu_j$-a.e. $t=(t_1,\ldots,t_j)\in [0,1)^j$. This together with \eqref{Eq:quasi-homo} implies that $f(z)=f(1,\ldots,1)z^{s}$ for $\gamma$-a.e. $z$ in $W=\mathbb{D}^j\times\mathbb{T}^{n-j}$, which is a part of the boundary $\partial\mathbb{D}^n$.

Now suppose $s_p<0$ for some $1\leq p\leq n$. We will show that in this case $f(1,\ldots,1)=0$. Without loss of generality, we may assume $p=1$. For all large positive integers $m_2,\ldots,m_n$, let $m=(0,m_2,\ldots,m_n)$ (the assumption that $n\geq 2$ is needed here). Since $m+s\nsucceq 0$,
\begin{align*}
\lambda_{m} & = \frac{\int_{[0,1)^n}|f(t_1,\ldots,t_n)|^2t_2^{2m_2}\cdots t_n^{2m_n}\mathrm{d}\mu_1(t_1)\cdots\mathrm{d}\mu_n(t_n)}{\int_{[0,1)^n}t_2^{2m_2}\cdots t_n^{2m_n}\mathrm{d}\mu_1(t_1),\cdots,\mathrm{d}\mu_n(t_n)}.
\end{align*}
Letting $(m_2,\ldots,m_n)\to (\infty,\ldots,\infty)$ and using Lemma \ref{L:limit} together with the fact that $\lambda_{m}\to 0$, we conclude that
\begin{align*}
0 = \int_{[0,1)}|f(t_1,1,\ldots,1)|^2\mathrm{d}\mu_1(t_1).
\end{align*}
This implies $f(t_1,1,\ldots,1)=0$ for $\mu_1$-a.e. $t_1$ on $[0,1)$. Since $1$ is in the support of $\mu_1$ and $f$ is continuous at the point $(1,\ldots,1)$, it follows that $f(1,\ldots,1)=0$.

Now \eqref{Eq:const-func} gives $f(t_1,\ldots,t_j,1,\ldots,1)t_1^{m_1}\ldots t_j^{m_j}=0$ for $\mu_1\times\cdots\times\mu_j$-a.e. $t=(t_1,\ldots,t_j)$ in $[0,1)^j$. For such $t$, if $t_1^{m_1}\cdots t_j^{m_j}\neq 0$, then we have $f(t_1,\ldots,t_j,1\ldots,1)=0$. Otherwise, there exists $1\leq i\leq j$ so that $t_i=0$ and $m_i>0$. But $m_j=\max\{0,-s_i\}$, so $s_i<0$. Since $t_i=0=t_i\zeta_i$ for any $|\zeta_i|=1$, we have
\begin{align*}
f(t_1,\ldots,t_i,\ldots,t_j,1,\ldots,1) & = f(t_1,\ldots,t_i\zeta_i,\ldots,t_j,1,\ldots,1)\\
& = \zeta_i^{s_i}f(t_1,\ldots,t_i,\ldots,t_j,1,\ldots,1).
\end{align*}
This implies $f(t_1,\ldots,t_j,1,\ldots,1)=0$ because $\zeta_i$ can be chosen so that $\zeta_i^{s_i}\neq 1$. Therefore, $f(t_1,\ldots,t_j,1,\ldots,1)=0$ for $\mu_1\times\cdots\times\mu_j$-a.e. $t=(t_1,\ldots,t_j)$ in $[0,1)^j$, which implies $f(z)=0$ for $\gamma$-a.e. $z\in W=\mathbb{D}^j\times\mathbb{T}^{n-j}$.

The same argument applies to other parts of $\partial\mathbb{D}^n$ which are different from $\mathbb{T}^n$. On $\mathbb{T}^n$, \eqref{Eq:quasi-homo} gives $f(\zeta)=\zeta^s f(1,\ldots,1)$. If $s_p<0$ for some $1\leq p\leq n$ then since $f(1,\ldots,1)=0$ as showed above, we conclude that $f(\zeta)=0$ for $\zeta\in\mathbb{T}^n$. So the conclusions of the proposition also hold for $z$ in $\mathbb{T}^n\subset\partial\mathbb{D}^n$. The proof of the proposition is now completed.
\end{proof}

\section{Compact Hankel operators with more general symbols}\label{S:compact}

We have seen that any $f$ in $L^2_{\vartheta}$ admits the decomposition $f=\sum_{l\in\mathbb{Z}^n}f_l$, where $f_l=Q_l(f)$ is the orthogonal projection of $f$ on the space of quasi-homogeneous functions of multi-degree $l$. The next proposition shows that the compactness of $H_{f}$ implies the compactness of each $H_{f_l}$. We are then able to apply the results in the previous section. The Hardy space version of the proposition was proved in \cite{AhernJOT2009}. Our proof here, which also works for the Hardy space, is more direct.

\begin{proposition}\label{P:cptReduction}
Suppose $f\in L^2_{\vartheta}$ so that the operator $H_{f}$ is compact on $A^2_{\vartheta}$. Then for each $s\in\mathbb{Z}^n$, the operator $H_{f_s}$ is compact.
\end{proposition}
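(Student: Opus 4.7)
The plan is to realize $H_{f_s}$ as a Bochner-integral average of unitary conjugates of $H_f$ over the torus $\mathbb{T}^n$, and then use the fact that the ideal of compact operators is closed under such averaging.

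For each $\zeta\in\mathbb{T}^n$, define $U_\zeta$ on $L^2_{\vartheta}$ by $(U_\zeta g)(z)=g(\zeta\cdot z)$. Rotation invariance of $\sigma$ together with \eqref{Eq:int-polar} shows that $\vartheta$ is itself rotation invariant, so $U_\zeta$ is unitary with $U_\zeta^{*}=U_{\bar\zeta}$. Because $z\mapsto \zeta\cdot z$ is biholomorphic on $\opdisk^n$, $U_\zeta$ maps $A^2_{\vartheta}$ onto itself, hence commutes with both $P$ and $I-P$. A direct change of variables yields $U_\zeta M_f U_\zeta^{*}=M_{f_\zeta}$, where $f_\zeta(z):=f(\zeta\cdot z)$, and therefore on the dense subspace of holomorphic polynomials
$$
U_\zeta\, H_f\, U_\zeta^{*} \;=\; H_{f_\zeta}.
$$

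By Lemma \ref{L:decomposition}, $f_s(z)=\int_{\mathbb{T}^n}\bar\zeta^{s}f_\zeta(z)\,\mathrm{d}\sigma(\zeta)$. For any fixed polynomial $g$ the function $\zeta\mapsto f_\zeta g$ is continuous from $\mathbb{T}^n$ into $L^2_{\vartheta}$, so interchanging the bounded operator $I-P$ with the $\sigma$-integral gives
$$
H_{f_s}\, g \;=\; \int_{\mathbb{T}^n}\bar\zeta^{s}\, U_\zeta\, H_f\, U_\zeta^{*}\, g\;\mathrm{d}\sigma(\zeta)
$$
for every holomorphic polynomial $g$. The right-hand side extends to a bounded operator $T$ on $A^2_{\vartheta}$, and since $T$ agrees with $H_{f_s}$ on a dense set, $H_{f_s}$ extends to $T$.

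The crucial step, and the main obstacle, is to upgrade the strong-operator continuity of $\zeta\mapsto U_\zeta$ to norm continuity of $\zeta\mapsto U_\zeta H_f U_\zeta^{*}$ in the Banach space of bounded operators $A^2_{\vartheta}\to L^2_{\vartheta}\ominus A^2_{\vartheta}$. This is where compactness of $H_f$ enters: if $K$ is compact and $V_\alpha\to V$ strongly, then $V_\alpha K\to VK$ in norm (because $V_\alpha$ converges uniformly on the relatively compact set $K\cdot\mathrm{ball}(A^2_{\vartheta})$), and likewise $KV_\alpha^{*}\to KV^{*}$ in norm by passing to adjoints and applying the same reasoning to $K^{*}$. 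Hence $\zeta\mapsto \bar\zeta^{s} U_\zeta H_f U_\zeta^{*}$ is norm continuous on the compact set $\mathbb{T}^n$, and the display above is a Bochner integral, equal in norm to a limit of Riemann sums. Each Riemann sum is a finite linear combination of compact operators and the compact operators form a norm-closed subspace, so $T=H_{f_s}$ is compact, finishing the proof.
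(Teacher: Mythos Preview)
Your proof is correct and follows a genuinely different route from the paper's. The paper argues directly on the orthonormal basis: writing $H_f e_m=\sum_l H_{f_l}e_m$ and projecting onto $\mathcal{H}_{s+m}$ gives $H_{f_s}e_m=Q_{s+m}(H_f e_m)$, whence $\|H_{f_s}e_m\|_2\le\|H_f e_m\|_2$ for every $m\in\mathbb{Z}_+^n$. Since $H_{f_s}^*H_{f_s}$ is diagonal with respect to $\{e_m\}$ (Lemma~\ref{L:diagHankel}), and compactness of $H_f$ forces $\|H_f e_m\|_2\to 0$ as $|m|\to\infty$, the eigenvalues of $H_{f_s}^*H_{f_s}$ tend to zero and $H_{f_s}$ is compact. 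Your argument instead realizes $H_{f_s}$ as a Bochner average of unitary conjugates of $H_f$ over $\mathbb{T}^n$, relying on the general fact that compact operators form a norm-closed two-sided ideal and that left and right multiplication by a strongly continuous family of unitaries is norm continuous on compacts. The paper's approach is shorter and yields the explicit eigenvalue bound $\|H_{f_s}e_m\|_2\le\|H_f e_m\|_2$, but leans on the diagonal structure established in Lemma~\ref{L:diagHankel}. Your approach is more conceptual: it never touches the basis or the quasi-homogeneous decomposition beyond formula~\eqref{Eq:Q_s}, and would transfer verbatim to any strongly continuous action of a compact group commuting with the Bergman projection.
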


\begin{proof} For any $m\in\mathbb{Z}_{+}^{n}$, we have
\begin{align*}
H_{f}e_m & = (I-P)(fe_m) = (I-P)((\sum_{l\in\mathbb{Z}^n}f_l)e_m)\\
& = \sum_{l\in\mathbb{Z}^n}(I-P)(f_le_m) = \sum_{l\in\mathbb{Z}^n}H_{f_l}e_m.
\end{align*}
Since $Q_{s+m}(H_{f_l}e_m)=0$ if  $l\neq s$ and $Q_{s+m}(H_{f_s}e_m)=H_{f_s}e_m$, we obtain $Q_{s+m}(H_{f}e_m)=H_{f_s}e_m$. From Lemma \ref{L:diagHankel}, $H^{*}_{f_s}H_{f_s}$ is a diagonal operator with eigenvalues $\lambda_{m}$ given by
\begin{align*}
\lambda_{m} & = \|H_{f_s}e_m\|_{2}^2 = \|Q_{s+m}(H_{f}e_m)\|_{2}^2\leq \|H_{f}e_m\|_{2}^2.
\end{align*}
Since $H_{f}$ is compact, we have $\lim_{|m|\to\infty}\|H_{f}e_m\|_{2}=0$ (here $|m|=m_1+\cdots+m_n$). This implies $\lim_{|m|\to\infty}\lambda_m=0$ and hence, $H^{*}_{f_s}H_{f_s}$ is compact. Thus, $H_{f_s}$ is a compact operator.
\end{proof}

Suppose $g$ belongs to $A^2_{\vartheta}$. It was showed by Axler \cite{AxlerDMJ1986} that when $n=1$ and $\vartheta$ is the Lebesgue measure on the disk $\mathbb{D}$, $H_{\overline{g}}$ is compact if and only if $g$ is in the little Bloch space, that is, $\lim_{|z|\uparrow 1}(1-|z|^2)g'(z)=0$. For $n\geq 2$ and $\vartheta$ is the Lebesgue measure on the polydisc $\mathbb{D}^n$, a special case of \cite[Theorem D]{BekolleJFA1990} gives that $H_{\overline{g}}$ is compact if and only if $g$ is a constant function and in this case, $H_{\overline{g}}=0$. The following corollary to Proposition \ref{P:cptReduction} shows that this holds true for general measures $\vartheta$.

\begin{corollary}\label{C:holo-cpt}
Suppose $n\geq 2$. Let $g$ be a function in $A^2_{\vartheta}$ so that $H_{\overline{g}}$ is compact. Then $g$ is a constant function and hence $H_{\bar{g}}=0$.
\end{corollary}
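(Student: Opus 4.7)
The plan is to decompose $\bar{g}$ into quasi-homogeneous pieces, observe that each piece is automatically continuous because it is a single conjugate monomial, and then apply Theorem \ref{T:cpt-quasi} to kill all non-constant pieces.

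Write $g=\sum_{m\in\mathbb{Z}_{+}^n} a_m e_m$ in the standard orthonormal basis, so $\bar{g}=\sum_m \bar{a}_m \overline{e_m}$ in $L^2_{\vartheta}$. For $\zeta\in\mathbb{T}^n$, $\overline{e_m}(\zeta\cdot z)=\zeta^{-m}\overline{e_m}(z)$, so $\overline{e_m}\in\mathcal{H}_{-m}$. Since $Q_s$ is a bounded projection, it follows that
\begin{equation*}
(\bar{g})_s \;=\; Q_s(\bar{g}) \;=\; \begin{cases} \bar{a}_{-s}\,\overline{e_{-s}} & \text{if } -s\succeq 0,\\ 0 & \text{otherwise.} \end{cases}
\end{equation*}
In particular, each quasi-homogeneous component $(\bar{g})_s$ is a scalar multiple of a single conjugate monomial, hence continuous on $\cldisk^n$.

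Since $H_{\bar{g}}$ is compact, Proposition \ref{P:cptReduction} says each $H_{(\bar{g})_s}$ is compact. Fix $s\preceq 0$ with $s\neq 0$; then some component of $s$ is strictly negative, so $s\nsucceq 0$. Theorem \ref{T:cpt-quasi} therefore forces $(\bar{g})_s(z)=0$ for $\gamma$-a.e. $z\in\bdry$. Restricting to the distinguished boundary part $W=\mathbb{T}^n$ (on which $\gamma$ coincides with the normalized surface measure $\sigma$), we get $\bar{a}_{-s}\,\bar{\zeta}^{-s}/\sqrt{c_{-s}}=0$ for $\sigma$-a.e. $\zeta\in\mathbb{T}^n$. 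Because $|\bar{\zeta}^{-s}|=1$, this forces $a_{-s}=0$. Setting $m=-s$, we conclude $a_m=0$ for every $m\neq 0$, so $g=a_0/\sqrt{c_0}$ is a constant. Constants lie in $A^2_{\vartheta}$, so $H_{\bar{g}}=0$.

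There is essentially no single hard step: the entire argument is the combination of (i) the observation that the quasi-homogeneous components of $\bar{g}$ are automatically monomials (so the continuity hypothesis in Theorem \ref{T:cpt-quasi} is free), and (ii) passing compactness from $H_{\bar g}$ to each $H_{(\bar g)_s}$ via Proposition \ref{P:cptReduction}. The only place where $n\geq 2$ enters is through Theorem \ref{T:cpt-quasi} itself, which is why the statement fails in one variable.
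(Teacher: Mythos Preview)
Your proof is correct and follows essentially the same approach as the paper: compute $Q_{-m}(\bar g)=\bar a_m\overline{e_m}$, invoke Proposition~\ref{P:cptReduction} to make each $H_{\bar a_m\overline{e_m}}$ compact, and then use Theorem~\ref{T:cpt-quasi} (with $s=-m\nsucceq 0$ for $m\neq 0$) to conclude $a_m=0$. You have simply spelled out two details the paper leaves implicit---continuity of the conjugate monomials and the restriction to $\mathbb{T}^n$ to read off $a_{-s}=0$.
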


\begin{proof} Write $g = \sum_{m\in\mathbb{Z}_{+}^n}c_{m} e_m$. For $m\in\mathbb{Z}_{+}^n$, since $Q_{-m}(\overline{g})=\bar{c}_{m}\bar{e}_{m}$ and $H_{\overline{g}}$ is compact, Proposition \ref{P:cptReduction} implies that $H_{\bar{c}_m\bar{e}_m}$ is compact. Theorem \ref{T:cpt-quasi} then shows that $\bar{c}_{m}=0$ for all $m\neq 0$. Thus, $g$ is a constant function.
\end{proof}

We are now ready for our main theorem in this paper.
\begin{theorem}\label{T:cpt-cont}
Suppose $n\geq 2$. Let $f$ be continuous on $\cldisk^n$ so that $H_{f}$ is a compact operator. Then there is a function $h$ which is continuous on $\cldisk^n$ and holomorphic on $\opdisk^n$, and a bounded function $g$ satisfying $\lim_{z\to\bdry}g(z)=0$ so that $f(z)=h(z)+g(z)$ for $\vartheta$-a.e. $z$ in $\mathbb{D}^n$.
\end{theorem}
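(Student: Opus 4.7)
I would construct $h$ as the ``holomorphic boundary extension'' of the restriction $f|_{\mathbb{T}^n}$, then show $g:=f-h$ vanishes on the support of the measure at the topological boundary, which forces the required $\vartheta$-a.e.\ smallness near $\bdry$. Throughout, the engine is Proposition~\ref{P:cptReduction} combined with Theorem~\ref{T:cpt-quasi} applied to every quasi-homogeneous piece $f_s=Q_s(f)$, which is continuous on $\cldisk^n$. For $s\succeq 0$ one obtains $f_s(z)=f_s(1,\ldots,1)\,z^s$ for $\gamma$-a.e.\ $z\in\bdry$; for $s\nsucceq 0$ one obtains $f_s(z)=0$ for $\gamma$-a.e.\ such $z$. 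Since $\gamma$ restricts to $\sigma$ on $\mathbb{T}^n$ and has full support there, continuity upgrades both identities to hold everywhere on $\mathbb{T}^n$. In particular $f_s(1,\ldots,1)=0$ whenever $s\nsucceq 0$.

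\textbf{Constructing $h$.} I would introduce the holomorphic polynomials
\[
p_N(z)\;=\;\sum_{\substack{s\succeq 0\\ s_j\leq N}}\prod_{j=1}^n\Big(1-\frac{s_j}{N+1}\Big)\,f_s(1,\ldots,1)\,z^s.
\]
Because $f_s(1,\ldots,1)=0$ for all $s\nsucceq 0$, a direct calculation gives $p_N|_{\mathbb{T}^n}=\Lambda_N(f)|_{\mathbb{T}^n}$, and the Fej\'er-kernel statement recorded at the end of Section~\ref{S:prel} yields uniform convergence to $f|_{\mathbb{T}^n}$. The maximum principle on the polydisc applied to the holomorphic polynomials $p_N-p_M$ shows that $\|p_N-p_M\|_{\cldisk^n}=\|p_N-p_M\|_{\mathbb{T}^n}$, so $\{p_N\}$ is uniformly Cauchy on $\cldisk^n$; its limit $h$ is continuous on $\cldisk^n$, holomorphic on $\opdisk^n$, and satisfies $h|_{\mathbb{T}^n}=f|_{\mathbb{T}^n}$. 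Reading off Taylor coefficients (equivalently, Fourier coefficients of $h|_{\mathbb{T}^n}$) shows $Q_s(h)(z)=f_s(1,\ldots,1)\,z^s$ when $s\succeq 0$ and $Q_s(h)=0$ otherwise.

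\textbf{Analyzing $g=f-h$.} With $g:=f-h$ continuous on $\cldisk^n$, the expression $Q_s(g)=f_s-Q_s(h)$ is exactly the difference appearing in Theorem~\ref{T:cpt-quasi}, so $Q_s(g)=0$ for $\gamma$-a.e.\ $z\in\bdry$ in both the $s\succeq 0$ and $s\nsucceq 0$ cases. Intersecting over the countably many $s\in\mathbb{Z}^n$, there is a single $\gamma$-full subset of $\bdry$ on which every $Q_s(g)$ vanishes. Since the Ces\`aro means $\Lambda_N(g)$ are finite linear combinations of the $Q_s(g)$ and converge uniformly to $g$ on $\cldisk^n$, we conclude $g(z)=0$ at each such point, and by continuity $g\equiv 0$ on $\operatorname{supp}(\gamma)$.

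\textbf{Passage to the paper's boundary limit.} Given $\varepsilon>0$ I set $M_\varepsilon:=\{z\in\cldisk^n:|g(z)|\geq\varepsilon\}\cap\overline{\operatorname{supp}(\vartheta)}$, which is compact in $\cldisk^n$. A short support computation using the product form $\mu=\mu_1\otimes\cdots\otimes\mu_n$ and the fact that $1\in\operatorname{supp}(\mu_j)$ for every $j$ yields the inclusion $\overline{\operatorname{supp}(\vartheta)}\cap\bdry\subseteq\operatorname{supp}(\gamma)$: any boundary point on the left admits product-type open neighborhoods whose $\gamma$-measure is positive. Since $g$ vanishes on $\operatorname{supp}(\gamma)$, $M_\varepsilon$ is disjoint from $\bdry$ and is therefore a compact subset of $\opdisk^n$; outside $M_\varepsilon$ the inequality $|g|<\varepsilon$ holds on $\operatorname{supp}(\vartheta)$, i.e., $\vartheta$-almost everywhere. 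I expect this final step to be the genuine obstacle: it is where one must bridge the gap between the $\gamma$-a.e.\ information on $\bdry$ provided by Theorem~\ref{T:cpt-quasi} and the $\vartheta$-a.e.\ near-boundary control demanded by the paper's notion of $\lim_{z\to\bdry}g(z)=0$, and it is also where the product structure of $\mu$ is truly used.
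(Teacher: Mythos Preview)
Your proposal is correct and follows essentially the same route as the paper: both arguments apply Proposition~\ref{P:cptReduction} and Theorem~\ref{T:cpt-quasi} to each quasi-homogeneous piece $f_s$, build $h$ as a uniform limit of the Ces\`aro-weighted holomorphic polynomials $p_N$ (using $p_N|_{\mathbb{T}^n}=\Lambda_N(f)|_{\mathbb{T}^n}$ and the maximum principle), and then show $g=f-h$ vanishes $\gamma$-a.e.\ on $\bdry$. The only substantive difference is in the final passage from ``$g=0$ $\gamma$-a.e.\ on $\bdry$'' to ``$\lim_{z\to\bdry}g(z)=0$ in the $\vartheta$-sense'': the paper quotes Lemma~2.1 of \cite{LePAMS2009}, whereas you supply a self-contained argument via the support inclusion $\overline{\operatorname{supp}(\vartheta)}\cap\bdry\subseteq\operatorname{supp}(\gamma)$, which is precisely where the product structure $\mu=\mu_1\otimes\cdots\otimes\mu_n$ and the hypothesis $1\in\overline{\operatorname{supp}(\mu_j)}$ enter.
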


\begin{proof} For any $s\in\mathbb{Z}^n$, Proposition \ref{P:cptReduction} shows that $H_{f_s}$ is compact. Since $f$ is continuous, each $f_s$ is also continuous. By Theorem \ref{T:cpt-quasi}, there is a holomorphic monomial $h_s$ so that $(f_s-h_s)(w)=0$ for $\gamma$-a.e. $w\in\partial\mathbb{D}^n$. (In fact, $h_s=0$ if $s\nsucceq 0$ and $h_s(w)=f_s(1,\ldots,1)w^s$ if $s\succeq 0$).  

For each integer $N\geq 1$, define $$p_N(z)=\sum_{|s_1|,\ldots,|s_n|\leq N}\big(1-\frac{|s_1|}{N+1}\big)\cdots\big(1-\frac{|s_n|}{N+1}\big)h_{s_1,\ldots,s_n}(z)\quad\text{for } z\in\cldisk^n.$$
Then $p_N$ is a holomorphic polynomial and $p_N(w)=\Lambda_{N}(f)(w)$ for $\gamma$-a.e. $w\in\partial\mathbb{D}^n$, 
where $\Lambda_{N}(f)$ is the $N$th Ces{\`a}ro mean of $f$. 
Since $\gamma$ restricted on $\mathbb{T}^n\subset\partial\cldisk^n$ is the surface measure and $p_N-\Lambda_N(f)$ is continuous, 
we actually have $p_N(w)=\Lambda_N(f)(w)$ for all $w\in\mathbb{T}^n$. 
By the remark at the end of Section 2, $\Lambda_N(f)$ converges to $f$ uniformly on $\cldisk^n$. In particular,  $p_N|_{\mathbb{T}^n}=\Lambda_N(f)|_{\mathbb{T}^n}$ converges to $f|_{\mathbb{T}^n}$ uniformly. 
This implies that there is a function $h$ which is continuous on $\cldisk^n$ and holomorphic on $\opdisk^n$ so that $p_N$ converges uniformly to $h$ on $\cldisk^n$. Since $p_N(w)=\Lambda_N(f)(w)$ for $\gamma$-a.e. $w\in\partial\opdisk^n$ for each $N$, we conclude that $h(w)=f(w)$ for $\gamma$-a.e. $w\in\partial\opdisk^n$. Let $\tilde{g}=f-h$. Then $\tilde{g}$ is continuous on $\cldisk^n$ and $\tilde{g}(w)=0$ for $\gamma$-a.e. $w$ on $\partial\opdisk^n$. By Lemma 2.1 in \cite{LePAMS2009}, there is a function $g$ such that $\tilde{g}(z)=g(z)$ for $\vartheta$-a.e. $z$ in $\mathbb{D}^n$ and $\lim_{z\to w}g(z)=0$ for all $w\in\partial\opdisk^n$. From the compactness of $\bdry$, it follows that $\lim_{z\to\bdry}g(z)=0$. We then have $f(z)=h(z)+\tilde{g}(z)=h(z)+g(z)$ for $\vartheta$-a.e. $z\in\mathbb{D}^n$ and $h,g$ satisfy the requirements in the conclusion of the theorem.
\end{proof}

The continuity of $f$ on $\cldisk^n$ in Theorem \ref{T:cpt-cont} cannot be dropped. Indeed, there are bounded functions $f$ which are continuous on the open polydisc $\opdisk^n$ such that $H_{f}$ is compact and no decomposition $f=h+g$ with $h\in A^2_{\vartheta}$ and $\lim_{z\to\partial\mathbb{D}^n}g(z)=0$ is possible. In the rest of the section, we will give a construction of such a function.

Let $0<r_1<r_2<\cdots$ be an increasing sequence of positive real numbers that converges to $1$. Set $j_1=1$. Since $(r_{j_1},1)^n = \cup_{j=j_1+1}^{\infty}(r_{j_1},r_j)^n$ and $\mu((r_{j_1},1)^n)>0$, there is an integer $j_2\geq j_1+1$ so that $\mu((r_{j_1},r_{j_2})^n)>0$. Since $(r_{j_2},1)^n=\cup_{j=j_2+1}^{\infty}(r_{j_2},r_{j})^n$ and $\mu((r_{j_2+1},1)^n)>0$, there is an integer $j_3\geq j_2+1$ so that $\mu((r_{j_2},r_{j_3})^n)>0$. Continuing this process, we find a sequence of integers $\{j_k\}_{k=1}^{\infty}$ such that $j_{k+1}\geq j_k+1$ and $\mu((r_{j_{k}},r_{j_{k+1}})^n)>0$ for $k=1,2,\ldots$. For each such $k$, let $R_{k}=(r_{j_k},r_{j_{k+1}})^n$. Choose an open subset $V_{k}$ of $\mathbb{T}^n$ so that $0<\sigma(V_{k})<1/k$ and $\int_{E_k}K_z(z)\mathrm{d}\vartheta(z)<1/k^2$, where $E_{k}=\{r\cdot\zeta: r\in R_{k}, \zeta\in V_{k}\}$. The existence of $V_k$ follows from the fact that the function $z\mapsto K_z(z)$ is bounded on compact sets. Since $E_k$ is open in $\opdisk^n$ and $\vartheta(E_k)>0$, using the regularity of $\vartheta$, we can choose a continuous function $0\leq f_k\leq 1$ so that $f_k$ is supported in $E_{k}$ and $\vartheta(\{z: f(z)=1\})>0$. Put $f=\sum_{k=1}^{\infty}f_k$. Since the sets $E_k$'s are pairwise disjoint, the function $f$ is continuous on $\opdisk^n$ and $0\leq f(z)\leq 1$ for all $z\in\opdisk^n$. 

We now show that $H_{f}$ is a Hilbert-Schmidt operator, hence it is compact. Indeed, from \eqref{E:Hilbert-Schmidt},
\begin{align*}
\sum_{m\in\mathbb{Z}_{+}^n}\|H_fe_m\|_{2}^2 & \leq \int_{\mathbb{D}^n}|f(z)|^2K_z(z)\mathrm{d}\vartheta(z)\\
&\leq\sum_{k=1}^{\infty}\int_{E_k}|f(z)|^2K_z(z)\mathrm{d}\vartheta(z)<\sum_{k=1}^{\infty}\frac{1}{k^2}<\infty.
\end{align*}

For each $s\in\mathbb{Z}^n$, we will show that $\lim_{z\to\bdry}Q_s(f)(z)=0$. Since $f\geq 0$, it follows from formula \eqref{Eq:Q_s} in Lemma \ref{L:decomposition} that $|Q_s(f)(z)|\leq |Q_{0}(f)(z)|$. So it suffices to prove $\lim_{z\to\bdry}Q_{0}(f)(z)=0$. For $z=(z_1,\ldots,z_n)\in\opdisk^n$, we have
\begin{align*}
Q_{0}(f)(z) & = \int_{\mathbb{T}^n}f(z\cdot\zeta)\mathrm{d}\sigma(\zeta) = \int_{\mathbb{T}^n}f(|z_1|\zeta_1,\ldots,|z_n|\zeta_n)\mathrm{d}\sigma(\zeta)\\
&\leq \sum_{k=1}^{\infty}\int_{\mathbb{T}^n}\chi_{E_k}(|z_1|\zeta_1,\ldots,|z_n|\zeta_n)\mathrm{d}\sigma(\zeta).
\end{align*}
By the definition of $E_k$, $\chi_{E_k}(|z_1|\zeta_1,\ldots,|z_n|\zeta_n)=1$ if and only if the $n$-tuple $(|z_1|,\ldots,|z_n|)$ belongs to $R_k=(r_{j_k},r_{j_{k+1}})^n$ and $(\zeta_1,\ldots,\zeta_n)$ belongs to $V_{k}$. Let $k_0\geq 2$ be a positive integer. Suppose $z=(z_1,\ldots,z_n)$ so that $|z_i|>r_{j_{k_0}}$ for some $1\leq i\leq n$. If $(|z_1|,\ldots,|z_n|)$ does not belong to any $R_k$, $k\geq 1$, then $Q_0(f)(z)=0$. Otherwise, there is exactly one $k$ so that $(|z_1|,\ldots,|z_n|)\in R_k$, which is $(r_{j_k},r_{j_{k+1}})^n$. Since $|z_i|>r_{j_{k_0}}$, we conclude that $r_{j_{k+1}}>r_{k_0}$, which implies $k\geq k_0$. Therefore,
\begin{align*}
|Q_0(f)(z)| &\leq\int_{\mathbb{T}^n}\chi_{E_k}(|z_1|\zeta_1,\ldots,|z_n|\zeta_n)\mathrm{d}\sigma(\zeta)=\sigma(V_{k})\leq\frac{1}{k}\leq\frac{1}{k_0}.
\end{align*}
Since this holds true for any $z$ that does not belong to the compact set $[0,r_{j_{k_0}}]^n\times\mathbb{T}^n$, we conclude that $\lim_{z\to\bdry}Q_0(f)(z)=0$.

Suppose there were a decomposition $f=h+g$, where $h\in A^2_{\vartheta}$ and $g(z)\to 0$ as $z\to\bdry$. We will show that there would be a contraction. For $s\succeq 0$, from formula \eqref{Eq:Q_s} in Lemma \ref{L:decomposition} we see that $Q_{s}(g)(z)\to 0$ as $z\to\bdry$. This implies $Q_s(h)(z)=Q_s(f)(z)-Q_s(g)(z)\to 0$ as $z\to\bdry$. But $Q_s(h)$ is a multiple of $z^{s}$, as explained at the beginning of Section 3, so we have $Q_s(h)=0$ for all $s\succeq 0$. It follows that $h=0$ and hence, $f=g$. This is a contradiction because $g(z)\to 0$ as $z\to\bdry$ but by the construction of $f$, for any compact subset $M\subset\opdisk^n$, the set $\{z\in\opdisk^n\backslash M: f(z)=1\}$ has positive $\vartheta$-measure.

\end{document}